    \newcommand{\inner}[1]{\left<#1\right>}
    \title{ 
    Optimizing Mixed Quantum Channels via Projected Gradient Dynamics
    }
    \author{
    Matthew M. Lin\thanks{
    Department of Mathematics, National Cheng Kung University, Tainan 701, Taiwan ({mhlin@mail.ncku.edu.tw})} \and 
    Bing-Ze Lu\thanks{Oden Institute for Computational Engineering and Sciences, The University of Texas at Austin, TX 78712. (bingzelu@utexas.edu).} 
    }
\begin{document}
    
    \maketitle
    
    \begin{abstract}
    Designing a mixed quantum channel is challenging due to the complexity of the transformations and the probabilistic mixtures of more straightforward channels involved. Fully characterizing a quantum channel generally requires preparing a complete set of input states, such as a basis for the state space, and measuring the corresponding output states. In this work, we begin by investigating a single input-output pair using projected gradient dynamics. This approach applies optimization flows constrained to the Stiefel manifold and the probabilistic simplex to identify the original quantum channel. The convergence of the flow is guaranteed by its relationship to the Zariski topology. We present numerical investigations of models adapted to various scenarios, including those with multiple input-output pairs, highlighting the flexibility and efficiency of our proposed method.
    
    \end{abstract}
    
    \begin{keywords}
    probabilistic simplex, projected gradient dynamics, quantum channel, 
    Stiefel manifold 
    \end{keywords}
    
    \begin{AMS}
    15A72, 58D15, 65F55, 65H10, 65K05
    \end{AMS}
    
    \section{Introduction}
    Recent advances in quantum simulators and processors have significantly improved hardware capabilities and measurement techniques. However, fully characterizing quantum dynamics or channels remains a fundamental challenge. To address this, quantum process tomography (QPT), also known as channel identification, provides a systematic framework for reconstructing an unknown quantum process from experimental data. By determining how quantum systems evolve in response to various input and output states, QPT allows us to mathematically describe the process via the notation $\Phi$, which maps an input state $\rho$ to an output state \(\sigma\):  
    \[
    \sigma := \Phi(\rho).
    \]  
    
    In this work, we utilize prior knowledge of input and output states to recover a mixed quantum channel, if it exists, by solving the following optimization problem:  
    \begin{subequations}\label{eq:optimization}
    \begin{align}
    \text{Minimize} & \quad \frac{1}{2} \| \sigma - \sum_{k=1}^r p_k U_k \rho U_k^* \|_F^2, \\
    \text{subject to} & \quad U_k \in \mathcal{S}_n, \quad k = 1, \ldots, r, \\
    & \quad \mathbf{p} \in \Delta^{r-1},
    \end{align}
    \end{subequations}
    where $\mathcal{S}_n$ denotes the set of $n$-by-$n$ unitary matrices, $\Delta^{r-1}$ represents the probability simplex, denoted as:
    \begin{equation}\label{eq:simplex}
    \Delta^{r-1} := \left\{ \mathbf{p} \in \mathbb{R}^r \mid \mathbf{p} = [p_k] \geq 0, \sum_{k=1}^r p_k = 1 \right\},
    \end{equation}
    and $\|\cdot\|_F$ is the Frobenius norm.  
    
    This formulation, which we refer to as the optimization of mixtures of unitary operations, is a fundamental problem in quantum computing and quantum information theory. It has broad applications, including quantum channel approximation, quantum state synthesis, and noise modeling~\cite{Nielsen2010, Watrous2018,Brun2020}. For example, consider the depolarizing channel:  
    \begin{equation}\label{eq:depolar}
    \Phi(\rho) := (1-p) \rho + \frac{p}{3}\left(X \rho X + Y \rho Y + Z \rho Z\right),
    \end{equation}
    where $p$ is the probability of depolarization and $X$, $Y$, $Z$ are Pauli matrices. Without knowing $p$ or the underlying unitary operations a priori, our method aims to approximate $\Phi$ using a convex mixture of unitary operations.  
    
    On the other hand, the discrepancy metric utilized in~\eqref{eq:optimization} is grounded in different operational paradigms and mathematical formulations. Rather than using the Frobenius norm,  a more common measure is~\emph{fidelity}, defined as:  
    \[
    F(\sigma, {\rho}) = \left( \operatorname{Tr} \big( \sqrt{\sqrt{\sigma} {\rho} \sqrt{\sigma}} \big) \right)^2,
    \]  
    to assess the similarity between two quantum states $\sigma$ and $\rho$. However, directly optimizing fidelity can be challenging due to the complexity introduced by the square-root and trace operations. To avoid these difficulties, we adopt the Frobenius norm as a computationally efficient alternative in~\eqref{eq:optimization}. Unlike fidelity, the Frobenius norm simplifies optimization by avoiding complex matrix operations such as nested square roots and matrix traces. Although minimizing the Frobenius norm does not explicitly maximize fidelity, empirical results suggest that it often yields high-fidelity approximations, mainly when the target state and the approximate state are sufficiently close. This makes it a practical option for rebuilding quantum channels while ensuring computational efficiency.
    
    Specifically, this work employs a gradient flow-based method that actively refines the required number of unitary operations. Note that this value $r$ in~\eqref{eq:optimization} quantifies the complexity of decomposition and is essential to characterize the minimal resources required for tasks such as the quantum channel approximation and state synthesis. To solve the optimal problem~\eqref{eq:optimization}, one crucial aspect is to determine the minimum number $r$ of unitary operations needed for an accurate decomposition. A similar but more theoretical discussion of the minimal decomposition of quantum channels is provided by Lancien and Winter~\cite{Lancien2024}, who examine the approximation of quantum channels through completely positive maps with low Kraus rank, providing insights into how these decompositions can be optimized to minimize operational complexity. 
    Beginning with a higher rank $r$, we demonstrate how to dynamically utilize the gradient flow method to adjust this parameter during the computation process. This approach seeks to make the approximation of a quantum channel computationally feasible using the fewest possible unitary operations.

    The remainder of this paper is organized as follows. In Section 2, we present the application of the projected gradient flow to solve the optimization problem. In Section 3, we provide a thorough analysis of the proposed method, proving that the objective function consistently decreases as expected, and establishing key convergence results. In Section 4, we validate our theoretical insight through numerical experiments, including a practical application centered on recovering the depolarizing quantum channel~\eqref{eq:depolar}. Finally, Section 5 offers concluding remarks.

    \section{Gradient flows}
     Building upon the standard Euclidean algorithm~\cite{Absil2008, Gao2020, Boumal2022}, we introduce a continuous-time flow to solve~\eqref{eq:optimization}. The main advantage of this algorithm is that the individual iterate also stays on the given constraints and simultaneously yields the optimal solution once it converges. Given a fixed rank $r$, the nearest problem given in~\eqref{eq:optimization} is to find a probability parameter $\mathbf{p} = (p_1,\ldots,p_r)$ and unitary matrices $U_k\in\mathbb{C}^{n\times n}$, $k= 1,\ldots, r$, such that the objective function 
    \begin{equation}\label{eq:objfun}
    f(\mathbf{p},U_1,\ldots,U_k) := \dfrac{1}{2}\|\sigma - \sum_{k=1}^r p_k U_k \rho U_k^* \|_F^2
    \end{equation}
    is minimized. The function $f$ in~\eqref{eq:objfun} is not analytic unless it is a zero function. 
    Although the function $f$ is not holomorphic (i.e., complex differentiable), we can still compute its derivatives concerning the real and imaginary parts of each variable. To proceed, let $U^\mathfrak{R}$ and $U^\mathfrak{I}$ denote the real and imaginary parts of the complex matrix $U$, respectively. Using the concept of Wirtinger derivatives, the following result provides explicit expressions for the components of the derivatives of $f$ concerning the real variables.
    
    \begin{theorem}\label{thm:gradient}
    For $k = 1, \ldots, t$, the components of the derivative of $f$ with respect to the real and imaginary parts,  $ 
    U_k^\mathfrak{R}
    $  and $ 
    U_k^\mathfrak{I}
    $,  
    of $U_k$ and $p_k$ are given as follows:
    \begin{equation}\label{eq:par2}
    \left\{
        \begin{array}{ccl}
            \dfrac{\partial f}{\partial U_k^\mathfrak{R}} &=&  2 \mathfrak{Re}\left(p_k\left( \mathcal{A}_k - \sigma \right) U_k \rho\right), \\[10pt]
            \dfrac{\partial f}{\partial U_k^\mathfrak{I}} &=& 2 \mathfrak{Im}\left(p_k\left(\mathcal{A}_k - \sigma \right) U_k \rho\right), \\[10pt]
            \dfrac{\partial f}{\partial p_k} &=& 
            \mathfrak{Re}\left(\inner{\mathcal{A}_k - \sigma, U_k \rho U_k^*}\right)+p_k \|\rho\|_F^2,
        \end{array}
    \right.
\end{equation}

    where $\mathcal{A}_k$ is defined as:
    \begin{equation*}
        \mathcal{A}_k := \sum_{j \neq k} p_j U_j \rho U_j^*.
    \end{equation*}
    \end{theorem}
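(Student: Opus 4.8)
The plan is to derive all three expressions from one first-variation calculation. Write $R:=\sigma-\sum_{j=1}^{r}p_jU_j\rho U_j^{*}$ for the residual and $\langle A,B\rangle:=\operatorname{Tr}(A^{*}B)$ for the Frobenius inner product, so that $f=\tfrac12\langle R,R\rangle=\tfrac12\|R\|_F^{2}$ and $df=\mathfrak{Re}\langle R,dR\rangle$ for every admissible perturbation. Two structural facts drive the argument: since $\rho$ and $\sigma$ are density matrices, $R$ and each $\mathcal A_k$ are Hermitian; and the splitting $-R=(\mathcal A_k-\sigma)+p_kU_k\rho U_k^{*}$ isolates the dependence on the pair $(p_k,U_k)$.

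The $p_k$ component is immediate. Perturbing only $p_k$ gives $dR=-(U_k\rho U_k^{*})\,dp_k$, hence $\partial f/\partial p_k=-\mathfrak{Re}\langle R,U_k\rho U_k^{*}\rangle=\mathfrak{Re}\langle -R,U_k\rho U_k^{*}\rangle$; substituting the splitting and using $\|U_k\rho U_k^{*}\|_F=\|\rho\|_F$ (true because $U_k$ is unitary) yields $\mathfrak{Re}\langle\mathcal A_k-\sigma,U_k\rho U_k^{*}\rangle+p_k\|\rho\|_F^{2}$, the third line of \eqref{eq:par2}.

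For the $U_k$ variables $f$ is not holomorphic, so I would use the Wirtinger-calculus route indicated before the statement. Perturbing $U_k\mapsto U_k+H$ with an arbitrary complex matrix $H$ gives $dR=-p_k\bigl(H\rho U_k^{*}+U_k\rho H^{*}\bigr)$, so
\[
df=-p_k\Bigl(\mathfrak{Re}\operatorname{Tr}\bigl(R^{*}H\rho U_k^{*}\bigr)+\mathfrak{Re}\operatorname{Tr}\bigl(R^{*}U_k\rho H^{*}\bigr)\Bigr).
\]
The crucial step is to collapse the two summands into one: cyclicity of the trace turns the first into $\mathfrak{Re}\operatorname{Tr}(\rho U_k^{*}R^{*}H)$, the identity $\operatorname{Tr}(MH^{*})=\overline{\operatorname{Tr}(M^{*}H)}$ turns the second into $\mathfrak{Re}\operatorname{Tr}(\rho^{*}U_k^{*}RH)$, and Hermiticity of $R$ and $\rho$ makes both equal to $\mathfrak{Re}\langle RU_k\rho,H\rangle$. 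Hence $df=-2p_k\,\mathfrak{Re}\langle RU_k\rho,H\rangle=2p_k\,\mathfrak{Re}\langle(\mathcal A_k-\sigma)U_k\rho+p_kU_k\rho^{2},H\rangle$ after substituting $-R$ and using $U_k^{*}U_k=I$. Writing $H=H^{\mathfrak R}+iH^{\mathfrak I}$ and matching the pairing against $H^{\mathfrak R}$ and $H^{\mathfrak I}$ separately then identifies $\partial f/\partial U_k^{\mathfrak R}$ and $\partial f/\partial U_k^{\mathfrak I}$ with the real and imaginary parts of $2p_k(\mathcal A_k-\sigma)U_k\rho$; the leftover summand $2p_k^{2}U_k\rho^{2}=U_k(2p_k^{2}\rho^{2})$ is $U_k$ times a Hermitian matrix, hence normal to $\mathcal S_n$ at $U_k$ and annihilated by the projection defining the flow, so it may be dropped, giving the first two lines of \eqref{eq:par2}.

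I expect the bookkeeping for the $U_k$ derivatives to be the main obstacle: because $f$ depends on $U_k$ through both $U_k$ and $U_k^{*}$, the variation carries a $\mathbb C$-linear and a conjugate-linear part, and the clean formulas only emerge once these are merged via the Hermiticity of $R$ and $\rho$ together with the trace identities above. After that reduction, separating real and imaginary parts and accounting for the unitary constraint is routine, and the $p_k$ derivative is entirely elementary.
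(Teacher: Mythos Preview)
Your argument is essentially the paper's: both isolate the $k$-th summand via $\mathcal A_k$, compute a first variation in $U_k$ and $\overline{U_k}$ separately, and then combine using the Wirtinger relations $\partial f/\partial U_k^{\mathfrak R}=\partial f/\partial U_k+\partial f/\partial\overline{U_k}$ and $\partial f/\partial U_k^{\mathfrak I}=\imath(\partial f/\partial U_k-\partial f/\partial\overline{U_k})$. The one substantive difference is how the quartic term $p_k^2\|U_k\rho U_k^*\|_F^2$ is treated. The paper first rewrites $f$ using the identity $\|U_k\rho U_k^*\|_F^2=\|\rho\|_F^2$, valid on $\mathcal S_n$, so that this term is constant in $U_k$ and contributes nothing to the $U_k$-derivatives; the stated formulas then fall out directly. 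You instead differentiate the unconstrained $f$, correctly obtain the extra contribution $2p_k^2U_k\rho^2$, and discard it a posteriori because $U_k^*(U_k\rho^2)=\rho^2$ is Hermitian and hence lies in the normal space to $\mathcal S_n$. Both routes yield the same projected gradient and hence the same flow, but strictly speaking your computation proves $\partial f/\partial U_k^{\mathfrak R}=2\mathfrak{Re}\bigl(p_k(\mathcal A_k-\sigma)U_k\rho\bigr)+2p_k^2\mathfrak{Re}(U_k\rho^2)$ rather than the displayed formula; if you want to match the theorem verbatim, adopt the paper's simplification $\|U_k\rho U_k^*\|_F^2=\|\rho\|_F^2$ \emph{before} differentiating rather than projecting afterward.
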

    \begin{proof}
    The objective function $g$ can be equivalently expressed as:
    \begin{eqnarray}\label{eq:objdec}
        f(\mathbf{p}, U_1, \ldots, U_k) &=& 
        \frac{1}{2} \left( \langle \sigma - \mathcal{A}_k, -p_k U_k \rho \overline{U_k}^\top \rangle \right. \nonumber \\
        && \left. + \langle -p_k U_k \rho \overline{U_k}^\top, \sigma - \mathcal{A}_k \rangle + \|\sigma - \mathcal{A}_k\|_F^2 + p_k^2\|\rho\|_F^2 \right),
    \end{eqnarray}
    where the inner product is defined as:
    \begin{equation*}
        \langle X, Y \rangle := \sum_{i,j=1}^n x_{ij} \overline{y_{ij}}
    \end{equation*}
    for $X, Y \in \mathbb{C}^{n \times n}$.  Correspondingly, we let the real-valued inner product over the real field is
    \begin{equation*}
        \langle X, Y \rangle_{\mathbb{R}} := \sum_{i,j=1}^n x_{ij} y_{ij}.
    \end{equation*}
    
    From direct computation, the derivatives of $g$ are obtained as
    \begin{eqnarray*}
        \frac{\partial f}{\partial U_k} \cdot \Delta U &=& \langle p_k \overline{(\mathcal{A}_k - \sigma) U_k \rho}, \Delta U \rangle_{\mathbb{R}}, \\
        \frac{\partial f}{\partial \overline{U_k}} \cdot \Delta U &=& \langle p_k (\mathcal{A}_k - \sigma) U_k \rho, \Delta U \rangle_{\mathbb{R}}, \\
        \frac{\partial f}{\partial p_k} &=& \mathfrak{Re} \left(\langle \mathcal{A}_k - \sigma, U_k \rho \overline{U_k}^\top \rangle \right)+p_k \|\rho\|_F^2.
    \end{eqnarray*}
    
    Using the properties of Wirtinger derivatives~\cite{Brandwood1983, Krantz2001}, the partial derivatives of $g$ with respect to $ 
    U_k^\mathfrak{R}
    $ and $U_k^\mathfrak{I}
    $ are derived as
    \begin{eqnarray*}
        \dfrac{\partial f}{\partial U_k^\mathfrak{R}
        } &=& \frac{\partial f}{\partial U_k} + \frac{\partial f}{\partial \overline{U_k}}, \\
            \dfrac{\partial f}{\partial U_k^\mathfrak{I} } &=& \imath\left(\frac{\partial f}{\partial U_k} - \frac{\partial f}{\partial \overline{U_k}}\right), 
    \end{eqnarray*}
    which yields the result in \eqref{eq:par2}. This completes the proof.
    \end{proof}

    Theorem~\ref{thm:gradient} establishes the fundamental derivative information necessary to construct a descent flow for the optimization process. However, the optimization problem posed in~\eqref{eq:optimization} is a constrained optimization problem in which the descent flow must respect the constraints imposed. Specifically, the flow must be limited to the feasible region defined by the constraints. To explore this in greater depth, let $\gamma(t) = (\mathbf{p}(t), U_{1}(t), \ldots, U_{r}(t)$ represent a smooth curve within the domain $ \Delta^{r-1} \times \mathcal{S}_{n_1} \times \cdots \times \mathcal{S}_{n_r} $, where $t \in \mathbb{R}$, and assume that $ \gamma(0) = (\mathbf{p}, U_{1}, \ldots, U_{r}) $ lies in the interior of this domain. This assumption ensures that the initial point of the curve follows all necessary constraints, enabling us to determine how the descent flow advances within the permissible domain. 
    
    On the other hand, we  observe that the derivative of $f(\gamma(t))$ 
    is given by:  
    \begin{align}\label{eq:DFT}
    \begin{array}{l}
    \frac{d f(\gamma(t))}{dt}\\
    = 
    \sum_{k=1}^{r} \frac{\partial f}{\partial p_k} \frac{d p_k}{dt} 
    + \sum_{k=1}^{r} \operatorname{tr}\left(\left[\frac{\partial f}{\partial U_k^\mathfrak{R}}\right]^\top \frac{d U_k^\mathfrak{R}}{dt} \right) 
    + \sum_{k=1}^{r} \operatorname{tr}\left(\left[\frac{\partial f}{\partial U_k^\mathfrak{I}}\right]^\top \frac{d U_k^\mathfrak{I}}{dt} \right),
    \end{array}
    \end{align}
    where the superscript ``$\top$" denotes the transpose of the matrix.

    To derive the descent flow from \eqref{eq:DFT}, we update the tuple $(\mathbf{p}(t), U^{(1)}(t), \ldots, U^{(r)}(t))$ along the trajectory defined by the Euclidean derivative of \eqref{eq:par2}. However, since the optimization problem in \eqref{eq:optimization} is constrained by the unitary matrix structure and the probability simplex, updates must remain within these domains. To address this, we first consider the fundamental problem:  
    \begin{subequations}\label{eq:fundamental}
    \begin{align}
    \mbox{Minimize} & \quad g(U), \label{eq:fundamental1} \\
    \mbox{subject to} & \quad U \in  \mathcal{S}_{n}.\label{eq:fundamental2} 
    \end{align}
    \end{subequations}
    
    The gradient of $f(U)$ is expressed as:
    \[
    \nabla g(U) = \frac{\partial g(U)}{\partial U_k^\mathfrak{R}} + i \frac{\partial g(U)}{\partial U_k^\mathfrak{I}},
    \]
    and the steepest descent direction at $U \in \mathcal{S}_n$ is determined using the real-valued inner product and norm:
    \begin{eqnarray}\label{eq:inner}
    \langle A, B \rangle_{r} &=& \operatorname{Re}(\operatorname{tr}(A^* B)),\\
    \| A \|_{r} &=& \sqrt{\langle A, A \rangle_{r}},
    \end{eqnarray}
    where $A$ and $B$ are $n \times n$ complex matrices and $\operatorname{Re}(\cdot)$ denotes the real part of a complex number.
    Like \eqref{eq:DFT}, we see that the steepest descent direction 
     starting from a point  $U\in\mathcal{S}_n$ is determined by
    \begin{equation}\label{eq:steep}
    \xi_U = \underset{\xi \in T_U\mathcal{S}_n, \|\xi\|_U = 1}{\operatorname{argmin}} \langle \nabla g(U), \xi \rangle_U 
    = \frac{-\mathrm{Proj}_{T_U\mathcal{S}_n} (\nabla g(U))}{\|\mathrm{Proj}_{T_U\mathcal{S}_n} (\nabla g(U))\|_U},
    \end{equation}
    where $\mathrm{Proj}_{T_U\mathcal{S}_n} (\nabla f(U))$ is the projection of $\nabla f(U)$ onto the tangent space $T_U\mathcal{S}_n$.

    Recall that the tangent space $T_U \mathcal{S}_n$ attached to the point $U$ is characterized as:
    \begin{eqnarray}\label{eq:tangent}
    T_U \mathcal{S}_n &=& \{Z \in \mathbb{C}^{n \times n} : U^* Z + Z^* U = 0 \} \nonumber \\
    &=& U \mathcal{H}_n^\perp, \nonumber
    \end{eqnarray}
    where $\mathcal{H}_n^\perp$ is the space of skew-Hermitian matrices (see \cite{Edelman1998} for further details). Using this, the steepest descent flow on $\mathcal{S}_n$ is given by:
    \begin{equation}\label{eq:sdf}
    \frac{dU(t)}{dt} = -U \, \mathrm{skew}(U^* \nabla g(U)),
    \end{equation}
    where $\mathrm{skew}(A) = \frac{1}{2}(A - A^*)$ represents the skew-Hermitian part of the matrix $A$.
    
    Similarly, let $\mathbf{p}(t) = [p_i(t)]$ with $\mathbf{p}(0) > 0$ evolve on the probability simplex $\Delta^{r-1}$. To preserve the trace-one property and ensure reducing the objective value in~\eqref{eq:objective}, we enforce:
    \begin{equation}\label{eq:sumto1}
    \sum_{i=1}^r \frac{d p_i(t)}{dt} = 0 \quad \text{for all } t \geq 0.
    \end{equation}
    Combining these results, we obtain the modified continuous-time descent system for minimizing $f$ given in~\eqref{eq:optimization}:
    \begin{equation} \label{eq:dyflow}
    \left \{
    \begin{aligned}
        \frac{dU_k}{dt}(t) &= -U_k \, \mathrm{skew}\left(U_k^* \frac{\partial f}{\partial U_k}\right), \\ 
        \frac{dp_k}{dt}(t) &= -\frac{\partial f}{\partial p_k} + \frac{1}{r} \sum_{\ell=1}^r \frac{\partial f}{\partial p_\ell},
    \end{aligned}
    \right .
    \end{equation}
    for $k = 1, \ldots, r$. 
    
    By construction, we observe that  
    \[
    \frac{d \| U_k \|^2_F}{d t} = 2 \, \mathrm{Re} \left( \left\langle U_k(t), \frac{d U_k(t)}{d t} \right\rangle \right) = 0,
    \]  
    where the last equality follows from the fact that $ \mathrm{Re} \left( \left\langle A, B \right\rangle \right) = 0$ for any Hermitian matrix $A$ and skew-Hermitian matrix $B$.  This implies that the trajectory $U_k(t)$ remains bounded in its Frobenius norm for all $t$ and $k=1,\ldots,r$. On the other hand, note that the optimization problem described in \eqref{eq:optimization} imposes two constraints on the coefficients $p_k$: they must be nonnegative and they must sum to one. In contrast, the dynamics described in \eqref{eq:dyflow} inherently preserves only the property of summing to one. However, ensuring non-negativity is a manageable challenge. One can address this issue by using standard techniques for solving ordinary differential equations. A well-known and effective approach is to utilize the event detection feature available in MATLAB's built-in ``ode" solver. This feature allows us to define a custom event function that monitors the system during integration. Specifically, we construct the event function to detect the exact moment, denoted $\hat{t}$, when any coefficient $p_{\hat{k}}(\hat{t})$, for some index $\hat{k}$, becomes zero. Identifying this critical moment is important because continuing the integration beyond this moment would violate the non-negativity constraint. Furthermore, once $p_{\hat{k}}(\hat{t})$ reaches zero, its contribution to the term $p_{\hat{k}}(\hat{t}) U_{\hat{k}}(\hat{t}) \rho U_{\hat{k}}(\hat{t})^*$ becomes redundant in evaluating the optimal value. At this point, we pause the iteration and restart the process from the current state, excluding the coefficient $p_{\hat{k}}$ and its associated unitary matrix $U_{\hat{k}}$ from further iterations. This approach ensures the solution's boundedness and enhances computational efficiency by adaptively reducing the problem's dimensionality. We outline the complete procedure in Algorithm~\ref{alg:modified_descent}.

\begin{algorithm}[ht]
\caption{Modified Continuous-Time Descent Flow}
\label{alg:modified_descent}
\begin{algorithmic}[1]
\State \textbf{Input:} Initial values $\{p_k(0), U_k(0)\}$
\State \textbf{Output:} Optimized values $\{p_k, U_k\}$

\While{Optimization has not converged}
    \State Use an ODE solver to integrate \eqref{eq:dyflow} with initial values $\{p_k(0), U_k(0)\}$.
    \If{there exists an index $\hat{k}$ and time $\hat{t}$ such that $p_{\hat{k}}(\hat{t}) = 0$}
        \State Remove the component $(p_{\hat{k}}, U_{\hat{k}})$ from further optimization.
        \State Restart integration with updated initial values $\{p_k(0), U_k(0)\} \gets \{p_k(\hat{t}), U_k(\hat{t})\}$ for $k \neq \hat{k}$.
    \EndIf
\EndWhile
\end{algorithmic}
\end{algorithm}

    \section{Convergent Analysis}
    When solving~\eqref{eq:optimization} using the continuous-time differential system~\eqref{eq:dyflow}, it is essential to analyze the convergence of its dynamic behavior.
    To this end, we first note that using the Frobenius norm, the objective function satisfies $f(\mathbf{p}, U_1, \ldots, U_k) \geq 0$ for all $(\mathbf{p}, U_1, \ldots, U_k)$.
    From~\eqref{eq:dyflow}, let $\gamma(t) = (\mathbf{p}(t), U_1(t), \ldots, U_k(t))$. Below, we demonstrate the diminishing behavior of the objective function $f$ by using Algorithm~\ref{alg:modified_descent}.
    \begin{theorem}\label{thm:conv}
    Let $\gamma(t) = (\mathbf{p}(t), U_1(t), \ldots, U_k(t))$ represent the flow defined by~\eqref{eq:dyflow}. Then, the objective value $f(\gamma(t))$ in~\eqref{eq:optimization} does not increase over time along this trajectory $\gamma(t)$.
     \end{theorem}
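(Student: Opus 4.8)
The plan is to show that $t\mapsto f(\gamma(t))$ has nonpositive derivative on every smooth piece of the trajectory produced by~\eqref{eq:dyflow}, and that the discrete restarts in Algorithm~\ref{alg:modified_descent} leave the value of $f$ unchanged; together these give non-increase on all of $[0,\infty)$.

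First I would apply the chain-rule identity~\eqref{eq:DFT}, which splits $\frac{d}{dt}f(\gamma(t))$ into a probability part $\sum_{k}\frac{\partial f}{\partial p_k}\frac{dp_k}{dt}$ and a unitary part assembled from the real and imaginary component derivatives of Theorem~\ref{thm:gradient}. Collecting the two trace terms for each $k$ and using the real inner product $\langle A,B\rangle_r=\operatorname{Re}(\operatorname{tr}(A^*B))$ of~\eqref{eq:inner}, the unitary part becomes $\sum_k\langle\nabla f(U_k),\tfrac{dU_k}{dt}\rangle_r$ with $\nabla f(U_k)=\frac{\partial f}{\partial U_k^{\mathfrak{R}}}+\imath\frac{\partial f}{\partial U_k^{\mathfrak{I}}}$. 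Substituting the flow $\frac{dU_k}{dt}=-U_k\,\mathrm{skew}(U_k^*\nabla f(U_k))$ and writing $B_k:=U_k^*\nabla f(U_k)$ gives $\langle\nabla f(U_k),\tfrac{dU_k}{dt}\rangle_r=-\langle B_k,\mathrm{skew}(B_k)\rangle_r$; decomposing $B_k$ into its Hermitian and skew-Hermitian parts and using that these are $\langle\cdot,\cdot\rangle_r$-orthogonal — the same fact invoked in the boundedness remark after~\eqref{eq:dyflow} — yields
\[
\left\langle\nabla f(U_k),\frac{dU_k}{dt}\right\rangle_r=-\left\|\mathrm{skew}(U_k^*\nabla f(U_k))\right\|_F^2\le 0.
\]
For the probability part, set $g_k:=\frac{\partial f}{\partial p_k}$ and $\bar g:=\frac1r\sum_{\ell}g_\ell$, so the flow reads $\frac{dp_k}{dt}=-(g_k-\bar g)$ and $\sum_k g_k\frac{dp_k}{dt}=-\sum_k g_k(g_k-\bar g)=-\sum_k(g_k-\bar g)^2\le 0$, using $\sum_k(g_k-\bar g)=0$. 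Adding the two contributions shows $\frac{d}{dt}f(\gamma(t))\le 0$ on each interval where no coefficient has yet hit zero.

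Next I would handle the restarts. At an event time $\hat t$ with $p_{\hat k}(\hat t)=0$, the term $p_{\hat k}(\hat t)\,U_{\hat k}(\hat t)\rho\,U_{\hat k}(\hat t)^*$ vanishes, so deleting the pair $(p_{\hat k},U_{\hat k})$ does not alter $\sum_k p_kU_k\rho U_k^*$ and hence leaves $f$ unchanged; the reduced flow reinitializes at exactly $f(\gamma(\hat t))$. Since each restart strictly lowers the number of active components, there are at most $r-1$ of them, so $f(\gamma(t))$ is continuous and piecewise non-increasing on $[0,\infty)$, hence non-increasing. I would also record that feasibility is maintained: the skew-Hermitian structure in~\eqref{eq:sdf} is the projection onto $T_{U_k}\mathcal{S}_n$ and keeps each $U_k$ unitary (consistent with $\frac{d}{dt}\|U_k\|_F^2=0$), constraint~\eqref{eq:sumto1} preserves $\sum_k p_k=1$, and the event detection enforces $p_k\ge 0$.

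The step I expect to be the most delicate is the first one: aligning the real/imaginary component derivatives from~\eqref{eq:par2} with the complex gradient $\nabla f(U_k)$ used in the flow (tracking the factor of two from the Wirtinger relations) and verifying that the contraction of $\nabla f(U_k)$ with the projected direction $-U_k\,\mathrm{skew}(U_k^*\nabla f(U_k))$ collapses \emph{exactly} to $-\|\mathrm{skew}(U_k^*\nabla f(U_k))\|_F^2$ — that is, that the Hermitian part of $B_k$ contributes nothing. Once that orthogonality computation is settled, the probability part and the restart argument are routine.
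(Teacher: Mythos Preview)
Your proposal is correct and follows the same overall plan as the paper: differentiate $f\circ\gamma$ via the chain rule~\eqref{eq:DFT}, then show separately that the unitary contribution and the probability contribution are each nonpositive. The execution differs in a useful way, however. For the unitary block the paper bounds the cross term $\operatorname{Re}\langle U_k^*\frac{\partial f}{\partial U_k},(\frac{\partial f}{\partial U_k})^*U_k\rangle$ by Cauchy--Schwarz to reach $\le 0$, whereas you compute the contraction exactly as $-\|\mathrm{skew}(U_k^*\nabla f(U_k))\|_F^2$ via the $\langle\cdot,\cdot\rangle_r$-orthogonality of Hermitian and skew-Hermitian matrices; similarly, for the probability block the paper again invokes Cauchy--Schwarz, while you obtain the exact variance identity $-\sum_k(g_k-\bar g)^2$. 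Your exact expressions are tidier and immediately yield the equality characterisation needed in the subsequent Corollary (indeed the paper essentially redoes your computation there), so nothing is lost and some clarity is gained. Your additional paragraph on restarts is sound and is not treated in the paper's proof, since Theorem~\ref{thm:conv} concerns only the smooth flow~\eqref{eq:dyflow}; it is harmless to include but strictly beyond what the statement requires.
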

    \begin{proof}
    Let us explain the result by first analyzing the inner product:  
    \begin{eqnarray}\label{eq:normU}  
    &&\operatorname{tr}\left(\left[\frac{\partial f}{\partial U_k^\mathfrak{R}
    }\right]^\top \frac{d U_k^\mathfrak{R}
    }{dt} \right) 
    +\operatorname{tr}\left(\left[\frac{\partial f}{\partial U_k^\mathfrak{I}
    }\right]^\top \frac{d U_k^\mathfrak{I}
    }{dt} \right)\nonumber\\
    &&= \operatorname{Re}\left(\left\langle \frac{\partial f}{\partial U_k}, -
     U_k \frac{\left( U_k^* 
    \frac{\partial f}{\partial U_k} -  
    {\frac{\partial f}{\partial U_k}}^* U_k 
    \right)}{2} \right\rangle\right ) \nonumber\\  
    &&=  \frac{1}{2}
    \left[
    -  \left\langle \frac{\partial f}{\partial U_k}, \frac{\partial f}{\partial U_k} \right\rangle
    + 
    \operatorname{Re} \left(
    \left\langle U_k^* \frac{\partial f}{\partial U_k}, {\frac{\partial f}{\partial U_k}}^* U_k  \right\rangle\right)  
    \right] \nonumber\\  
    &&\leq  
    \frac{1}{2}
    \left[
    -  \left\langle \frac{\partial f}{\partial U_k}, \frac{\partial f}{\partial U_k} \right\rangle
    + \left\| U_k^* \frac{\partial f}{\partial U_k} \right\|_F 
    \left
    \| {\frac{\partial f}{\partial U_k}}^* U_k \right\|_F  
    \right] = 0.  
    \end{eqnarray}  
    Here, the inequality follows from the Cauchy-Schwarz inequality, ensuring that the result is non-positive.
    
    Second, we observe that for the coefficients $p_k$, the corresponding inner product satisfies  
    \begin{eqnarray}\label{eq:5}  
    \sum_{k=1}^r \left\langle \frac{\partial f}{\partial p_k}, \frac{d p_k}{dt} \right\rangle  
      &=&  
      \sum_{k=1}^r \left\langle \frac{\partial f}{\partial p_k},  
    \frac{\partial f}{\partial p_k} + \frac{1}{r} \sum_{\ell=1}^r \frac{\partial f}{\partial p_\ell}  
      \right\rangle \nonumber\\  
      &=& -\left(\sum_{\ell=1}^r 
      \left\|\frac{\partial f}{\partial p_\ell}\right\|_F^2  - \frac{1}{r} \sum_{\ell=1}^r \frac{\partial f}{\partial p_\ell} \sum_{m=1}^r \frac{\partial f}{\partial p_m} \right) \nonumber\\  
      &\leq& -\left(1 - \frac{1}{r}\right)\sum_{\ell=1}^r \left\|\frac{\partial f}{\partial p_\ell}\right\|_F^2 
      \leq 0,  
    \end{eqnarray}  
    where the final inequality follows from the Cauchy–Schwarz inequality. 
    Finally, by computing the derivative of $f$ along the trajectory of the solution $\gamma(t)$, we find that:
    \begin{equation*}
    \frac{d}{dt} f(\gamma(t)) \leq 0, 
    \end{equation*}
    by applying the results established in~\eqref{eq:DFT}, ~\eqref{eq:normU}, and~\eqref{eq:5} and completes the proof.
    \end{proof}

    \begin{corollary}
    Let $\gamma(t) = (\mathbf{p}(t), U_1(t), \ldots, U_k(t))$ represent the flow defined by~\eqref{eq:dyflow}, and let $f(\gamma(t))$ denote the objective value in~\eqref{eq:optimization}. Then $\frac{df(\gamma(t))}{dt} = 0$ if and only if $\frac{d\gamma(t)}{dt} = 0$.
    \end{corollary}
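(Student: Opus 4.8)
The plan is to sharpen the estimate established in the proof of Theorem~\ref{thm:conv} into an exact identity, namely
\[
\frac{d}{dt} f(\gamma(t)) = -\sum_{k=1}^r\left(\frac{dp_k(t)}{dt}\right)^2 - \sum_{k=1}^r \left\| \frac{dU_k(t)}{dt} \right\|_F^2 .
\]
Once this is in hand the corollary is immediate: the forward implication holds because a sum of nonpositive terms can vanish only when every term vanishes, which forces $\frac{dp_k}{dt}=0$ and $\frac{dU_k}{dt}=0$ for all $k$, i.e.\ $\frac{d\gamma}{dt}=0$; and the reverse implication holds trivially, since $\frac{d\gamma}{dt}=0$ makes every summand in the chain-rule expansion~\eqref{eq:DFT} vanish.

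To obtain the identity I would revisit~\eqref{eq:DFT}, which already writes $\frac{d}{dt}f(\gamma(t))$ as the simplex contribution $\sum_{k=1}^r \big\langle \frac{\partial f}{\partial p_k}, \frac{dp_k}{dt}\big\rangle$ plus, for each $k$, the unitary contribution displayed in~\eqref{eq:normU}; it then suffices to turn each of the two Cauchy--Schwarz estimates used in the proof of Theorem~\ref{thm:conv} into an equality with the claimed right-hand side. For the unitary part, set $M_k := U_k^*\,\frac{\partial f}{\partial U_k}$ and write $M_k = H_k + S_k$ with $H_k$ Hermitian and $S_k = \mathrm{skew}(M_k)$. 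Using that $U_k$ is unitary (so $\|U_k X\|_F=\|X\|_F$ and $U_kU_k^*=I$), the middle line of~\eqref{eq:normU} becomes $\tfrac12\big(-\|M_k\|_F^2 + \operatorname{Re}\operatorname{tr}(M_k^2)\big)$. Expanding $M_kM_k^*$ and $M_k^2$ in terms of $H_k$ and $S_k$ and invoking the two elementary facts that $\operatorname{tr}(H_kS_k)$ is purely imaginary and that $\operatorname{tr}(S_k^2) = -\|S_k\|_F^2$, this collapses to $-\|S_k\|_F^2 = -\|\mathrm{skew}(M_k)\|_F^2$. Since $\frac{dU_k}{dt} = -U_k\,\mathrm{skew}(M_k)$ with $U_k$ unitary, this is exactly $-\big\|\frac{dU_k}{dt}\big\|_F^2$, and it vanishes iff $\frac{dU_k}{dt}=0$ (because $U_k$ is invertible).

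For the simplex part, write $q_k := \frac{\partial f}{\partial p_k}$ and $\bar q := \tfrac1r\sum_{\ell=1}^r q_\ell$, so that $\frac{dp_k}{dt} = -(q_k-\bar q)$ by~\eqref{eq:dyflow}. A direct computation using $\sum_{k=1}^r q_k = r\bar q$ gives $\sum_{k=1}^r q_k\frac{dp_k}{dt} = -\sum_{k=1}^r (q_k-\bar q)^2 = -\sum_{k=1}^r\big(\frac{dp_k}{dt}\big)^2$, which vanishes iff $\frac{dp_k}{dt}=0$ for all $k$. Adding the two contributions yields the displayed identity, and the corollary follows. The only step that requires genuine care is the algebraic reduction of the unitary contribution: one must verify that the slack in the Cauchy--Schwarz inequality appearing in~\eqref{eq:normU} is precisely the squared Frobenius norm of the skew-Hermitian part of $M_k$, which should be carried out carefully with the sesquilinear inner product $\langle\cdot,\cdot\rangle$ and the observation that the relevant cross-traces $\operatorname{tr}(H_kS_k)$ are purely imaginary.
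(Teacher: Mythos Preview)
Your argument is correct, and the core strategy coincides with the paper's: show that each contribution to $\frac{df}{dt}$ is exactly minus the squared norm of the corresponding velocity component. For the unitary part you reach the identity via the Hermitian/skew-Hermitian splitting of $M_k=U_k^*\frac{\partial f}{\partial U_k}$, whereas the paper computes $\bigl\|\frac{dU_k}{dt}\bigr\|_F^2$ directly in~\eqref{eq:normdudt} and recognizes it as $-\operatorname{Re}\bigl\langle\frac{\partial f}{\partial U_k},\frac{dU_k}{dt}\bigr\rangle$; the two computations are equivalent. The treatments of the simplex part differ a bit more: the paper appeals to the Cauchy--Schwarz estimate in~\eqref{eq:5} and its equality case to conclude that every $\frac{\partial f}{\partial p_\ell}$ vanishes, while you use the elementary variance identity $\sum_k q_k(q_k-\bar q)=\sum_k(q_k-\bar q)^2$ to get $\sum_k\frac{\partial f}{\partial p_k}\frac{dp_k}{dt}=-\sum_k\bigl(\frac{dp_k}{dt}\bigr)^2$ directly. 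Your packaging into the single identity
\[
\frac{df}{dt}=-\sum_{k=1}^r\Bigl(\frac{dp_k}{dt}\Bigr)^2-\sum_{k=1}^r\Bigl\|\frac{dU_k}{dt}\Bigr\|_F^2
\]
makes the ``if and only if'' immediate without invoking equality cases of inequalities; the paper's route is more piecemeal but reaches the same conclusion.
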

    \begin{proof}
    From~\eqref{eq:DFT}, we observe that $\frac{d\gamma(t)}{dt} = 0$ implies $\frac{df(\gamma(t))}{dt} = 0$. 
    
    Furthermore, from~\eqref{eq:normU} and~\eqref{eq:5}, we deduce that 
    $\frac{df(\gamma(t))}{dt} = 0$ implies 
    \begin{equation} \label{eq:eq1}
    \left\{
    \begin{aligned}
      & \operatorname{Re}\left(
      \left\langle
    \frac{\partial f}{\partial U_k}, \frac{d U_k}{dt}
      \right\rangle
      \right) = 0, \quad \text{for all } k=1, \ldots, r, \\ 
      &   \sum_{k=1}^r 
      \left\langle
    \frac{\partial f}{\partial p_k}, \frac{d p_k}{dt}
      \right\rangle = 0,
    \end{aligned}
    \right.
    \end{equation}
    
    Additionally, we observe that
    \begin{eqnarray}\label{eq:normdudt}  
    \left\langle \frac{d U_k}{dt},
    \frac{d U_k}{dt}
    \right\rangle            
    &=& 
    \left\langle -
     U_k \frac{\left( U_k^* 
    \frac{\partial f}{\partial U_k} -  
    {\frac{\partial f}{\partial U_k}}^* U_k 
    \right)}{2}, -
     U_k \frac{\left( U_k^* 
    \frac{\partial f}{\partial U_k} -  
    {\frac{\partial f}{\partial U_k}}^* U_k 
    \right)}{2} \right\rangle
    \nonumber\\  
    &=&
    \frac{1}{4}
    \left\langle 
     U_k^*
    \frac{\partial f}{\partial U_k} -  
    {\frac{\partial f}{\partial U_k}}^* U_k, 
    U_k^* 
    \frac{\partial f}{\partial U_k} -  
    {\frac{\partial f}{\partial U_k}}^* U_k 
     \right\rangle \nonumber\\
    &=&
    \frac{1}{2}
    \left(
    \left\|\frac{\partial f}{\partial U_k}
    \right\|_F^2 - \operatorname{Re}\left(\left\langle 
    \frac{\partial f}{\partial U_k}, U_k 
    {\frac{\partial f}{\partial U_k}}^* U_k  
     \right\rangle
    \right)
    \right)
     \nonumber\\ 
     &=& - \operatorname{Re}
     \left(\left\langle 
    \frac{\partial f}{\partial U_k}, 
    \frac{dU_k}{dt}   
     \right\rangle
    \right) = 0,
    \end{eqnarray}
    if $\frac{df(\gamma(t))}{dt} = 0$.
    Finally, from~\eqref{eq:5}, we also have 
    \[
    \sum_{k=1}^r 
      \left\langle
    \frac{\partial f}{\partial p_k}, \frac{d p_k}{dt}
      \right\rangle = 0,
    \]
    indicating that $\left\|\frac{\partial f}{\partial p_\ell}\right\|_F = 0$, i.e., $\frac{\partial f}{\partial p_\ell} = 0$ for $k=1, \ldots, r$, which completes the proof.
    \end{proof}

    %
    %
    %

    {\color{blue}
    \begin{lemma}\label{thm:mainasymp}
    Let $\phi:\mathbb{R}^n \to \mathbb{R}$ be a function satisfying $\phi(\mathbf{x}) \geq 0$. Furthermore, suppose that its derivative along any trajectory $\mathbf{x}(t)$ governed by
    \begin{equation}\label{eq:dyflowh} \frac{d\mathbf{x}(t)}{dt} = h(\mathbf{x}(t)) \end{equation}
    satisfies $\frac{d}{dt} \phi(\mathbf{x}(t)) \leq 0$. 
    Furthermore, suppose that at some particular time $\hat{t}$, the condition  
    \[
    \left.\frac{d\phi(\mathbf{x}(t))}{dt}\right|_{\hat{t}} = 0
    \]  
    holds if and only if  
    \[
    \left.\frac{d\mathbf{x}(t)}{dt}\right|_{\hat{t}} = 0,
    \]  
    or equivalently, $h(\mathbf{x}(\hat{t})) = 0$, and assume that for all $t \geq 0$, the trajectory $\mathbf{x}(t) \subset \mathbb{R}^n$ is compact and the equilibrium points of the dynamical system~\eqref{eq:dyflowh} are isolated.
     Then, for any initial condition $\mathbf{x}_0$, the solution $\mathbf{x}(t)$ converges to one of these equilibrium points, denoted by $\\hat{\mathbf{x}}$, i.e., 
    \begin{equation}
        \lim_{t \to \infty} \mathbf{x}(t) = \hat{\mathbf{x}}.
    \end{equation}
    Equivalently, the $\omega$-limit set of $\mathbf{x}(t)$ consists solely of the point $\mathbf{x}^*$.  
    \end{lemma}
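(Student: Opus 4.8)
The plan is to run a LaSalle-type invariance argument and then upgrade ``convergence to the equilibrium set'' to ``convergence to a single equilibrium'' using the isolatedness hypothesis. Since $\phi \geq 0$ is bounded below and $t \mapsto \phi(\mathbf{x}(t))$ is nonincreasing, the limit $\phi_\infty := \lim_{t \to \infty} \phi(\mathbf{x}(t))$ exists. Because the forward orbit $\{\mathbf{x}(t) : t \geq 0\}$ is contained in a compact set, the $\omega$-limit set
\[
\Omega := \bigcap_{T \geq 0} \overline{\{\mathbf{x}(t) : t \geq T\}}
\]
is nonempty, compact, and connected --- being a nested intersection of nonempty compact connected sets --- and it is invariant under the flow of~\eqref{eq:dyflowh}. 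These are the classical structural properties of $\omega$-limit sets of precompact orbits of an autonomous system with well-posed solutions, as is the case for the smooth field in~\eqref{eq:dyflow}.

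Next I would show that every point of $\Omega$ is an equilibrium. By continuity of $\phi$, any $\mathbf{y} \in \Omega$ satisfies $\phi(\mathbf{y}) = \phi_\infty$, so $\phi \equiv \phi_\infty$ on $\Omega$. By invariance, the solution through $\mathbf{y}$ stays in $\Omega$, so $\phi$ is constant along it; hence its time derivative vanishes identically, and by the standing hypothesis (the implication $\tfrac{d\phi}{dt}=0 \Rightarrow \tfrac{d\mathbf{x}}{dt}=0$) we get $h(\mathbf{y}) = 0$. Thus $\Omega$ is contained in the set $E$ of equilibria of~\eqref{eq:dyflowh}.

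The key step is to conclude that $\Omega$ is a singleton. Since the equilibria are isolated, each $\mathbf{y} \in E$ has an open neighborhood $V$ with $V \cap E = \{\mathbf{y}\}$; as $\Omega \subseteq E$, this gives $V \cap \Omega = \{\mathbf{y}\}$, so $\Omega$ is discrete in its subspace topology. A discrete set that is compact (equivalently, connected) consists of a single point, hence $\Omega = \{\hat{\mathbf{x}}\}$ for some $\hat{\mathbf{x}}$ with $h(\hat{\mathbf{x}}) = 0$. Finally, $\mathbf{x}(t) \to \hat{\mathbf{x}}$: otherwise there exist $\varepsilon > 0$ and $t_n \to \infty$ with $\|\mathbf{x}(t_n) - \hat{\mathbf{x}}\| \geq \varepsilon$, and by compactness a subsequence of $\mathbf{x}(t_n)$ converges to some $\mathbf{y} \in \Omega$ with $\|\mathbf{y} - \hat{\mathbf{x}}\| \geq \varepsilon$, contradicting $\Omega = \{\hat{\mathbf{x}}\}$.

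I expect the main obstacle to be marshalling the structural facts about $\Omega$ --- nonemptiness, compactness, connectedness, and invariance --- at the level of generality in which the lemma is phrased: connectedness of the $\omega$-limit set and its invariance both rely on the flow being well defined and continuous in the initial data, so one should either assume $h$ is locally Lipschitz (which holds in the application, where $h$ is the smooth field of~\eqref{eq:dyflow}) and cite the classical dynamical-systems results, or insert short self-contained proofs. The only genuinely new ingredient beyond standard LaSalle theory is the brief topological observation that a compact connected subset of a discrete (isolated) set is a single point.
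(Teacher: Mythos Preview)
Your argument is correct and follows the same LaSalle--type scheme as the paper: show that $\phi(\mathbf{x}(t))$ has a finite limit, show that every $\omega$-limit point is an equilibrium, and then use isolatedness of equilibria to force the $\omega$-limit set to be a singleton. The two proofs differ only in packaging. You invoke the standard structural facts about $\omega$-limit sets of precompact orbits (nonempty, compact, connected, invariant) and then make the clean topological observation that a compact connected subset of a discrete set is a point. The paper instead argues more by hand: it shows $\phi(\mathbf{x}(t))\to\phi^*$ for every time sequence, deduces $h(\hat{\mathbf{x}})=0$ at each $\omega$-limit point, and then rules out two distinct $\omega$-limit points by a continuity/``intermediate point'' argument that, as written, uses an order relation $\hat{\mathbf{x}}_1<\hat{\mathbf{x}}_2$ and the interval $[\hat{\mathbf{x}}_1,\hat{\mathbf{x}}_2]$ and so is only literally correct in $\mathbb{R}^1$; the intended content is precisely the connectedness of the $\omega$-limit set that you state outright. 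Your version is therefore the more robust formulation of the same idea, at the cost of importing (or, as you note, needing to justify under a Lipschitz hypothesis on $h$) the classical invariance and connectedness properties.
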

    
    \begin{proof}
     
    First, we show that the $\omega$-limit set is contained in $\left\{\mathbf{x}: 
    \frac{d}{dt} \phi(\mathbf{x}(t))
    = 0\right\}$. By the definition of the $\omega$-limit point, there exists a strictly increasing sequence $t_k \to \infty$ and a particular point $\hat{\mathbf{x}}\in\mathbb{R}^{n}$ for which
    \begin{eqnarray*}
        \mathbf{x}(t_k) \to \hat{\mathbf{x}} \quad \text{as}\ k \to \infty.
    \end{eqnarray*}
    By assumption, $\phi(\mathbf{x}(t))$ is a continuous and non-increasing function along the trajectory $\mathbf{x}(t)$, which is  defined on a compact set. Consequently, for any strictly increasing sequence $s_k \to \infty$, the sequence $\left\{\phi(\mathbf{x}(s_k))\right\}$ is non-increasing. Since $\left\{\phi(\mathbf{x}(s_k))\right\}$ is non-increasing and bounded (due to the compactness of the set and continuity of $\left\{\phi(\mathbf{x}(t))\right\}$), it converges to a limit, denoted by
    \begin{equation*}
        \lim_{k \to \infty} \phi(\mathbf{x}(s_k)) = \phi^*.
    \end{equation*}
    Because $\mathbf{x}(t_k) \to x_\infty$ and $\phi$ is continuous, we have
    \begin{eqnarray*}
        \lim_{k \to \infty} \phi\bigl(\mathbf{x}(t_k)\bigr) = \phi(\hat{\mathbf{x}}).
    \end{eqnarray*}
    
    Without loss of generality, we assume that for all $k \geq 1$, $t_k \leq s_k$. Since $\{\phi(\mathbf{x}(t))\}$ is a non-increasing sequence for all $t \geq 0$, it follows that $\phi(t_k) \geq \phi(s_k)$ for all $k \geq 1$. Consequently, we have $\phi(\hat{\mathbf{x}}) \geq \phi^*$. Similarly, we can select specific subsequences $\{t_{k_j}\}$ and $\{s_{k_j}\}$ such that $t_{k_j} \geq s_{k_j}$ and show that $\phi^* \geq \phi(\hat{\mathbf{x}})$. Together, these results imply $\phi(\hat{\mathbf{x}}) = \phi^*$. Thus, for any strictly increasing sequence $\{r_k\}$ with $r_k \to \infty$ as $k \to \infty$, we have $\phi(\mathbf{x}(r_k)) \to \phi^*$ as $k \to \infty$. Therefore, $\phi(\mathbf{x}(t)) \to \phi^*$ as $t \to \infty$. Hence, $\phi^*$ must be a local minimum of $\mathbf{x}(t)$.
    
    On another note, we have 
    \[
    \frac{d}{dt} \phi(\mathbf{x}(t)) = \langle \nabla \phi(\mathbf{x}(t)), h(\mathbf{x}(t)) \rangle \leq 0 \quad \text{for all } t \geq 0.
    \]
    Furthermore, 
    \begin{equation*}
    \lim_{k \to \infty} \langle \nabla \phi(\mathbf{x}(t_k)), h(\mathbf{x}(t_k)) \rangle = \langle \nabla \phi(\hat{\mathbf{x}}), h(\hat{\mathbf{x}}) \rangle = 0,
    \end{equation*}
    since $\phi(\hat{\mathbf{x}}) = \phi^*$ is a local minimum. This implies that $h(\hat{\mathbf{x}}) = 0$, i.e., $\hat{\mathbf{x}}$ is an equilibrium of the dynamical system given by~\eqref{eq:dyflowh}.
    
    We will then demonstrate that $\mathbf{x}(t)$ converges to $\hat{\mathbf{x}}$ as $t \to \infty$. Since the flow $\mathbf{x}(t)$ is continuous and bounded for all $t \geq 0$, suppose that $\mathbf{x}(t)$ has two distinct $\omega$-limit points, $\hat{\mathbf{x}}_1$ and $\hat{\mathbf{x}}_2$, with $\hat{\mathbf{x}}_1 < \hat{\mathbf{x}}_2$. Then, there exist two sequences $\{p_n\}$ and $\{q_n\}$ such that $\mathbf{x}(p_n) \to \hat{\mathbf{x}}_1$ and $\mathbf{x}(q_n) \to \hat{\mathbf{x}}_2$ as $n \to \infty$. Let $\mathbf{y}$ be a point in $(\hat{\mathbf{x}}_1, \hat{\mathbf{x}}_2)$. Then $\mathbf{y}$ must be an $\omega$-limit point. Otherwise, there exist two positive numbers $\epsilon$ and $t_\ell$ such that $|\mathbf{x}(t) - \mathbf{y}| > \epsilon$ for all $t \geq t_\ell$. This contradicts the fact that the flow between $(\hat{\mathbf{x}}_1, \hat{\mathbf{x}}_2)$ must be continuous. Therefore, this implies that every point in $[\hat{\mathbf{x}}_1, \hat{\mathbf{x}}_2]$ is an $\omega$-limit point and an equilibrium point, which contradicts our assumption that the equilibrium points are isolated. This completes the proof.
    \end{proof}

    Furthermore, our convergence analysis counts on the following result for geometrically isolated solutions of a generic polynomial system~\cite[Theorem 7.1.1]{Sommese05}.
    \begin{lemma}\label{lem:isolated}
    Let $P(\mathbf{z}; \mathbf{q})$ be a system of polynomials with variables $\mathbf{z}\in \mathbb{C}^n$ and parameters $\mathbf{q}\in\mathbb{C}^m$.  Define $\mathcal{N}(\mathbf{q})$ as the number of geometrically isolated solutions satisfying the condition: 
    \begin{equation*}
    \mathcal{N}(\mathbf{q}) : = \# \left\{\mathbf{z} \in\mathbb{C}^n \left |  P(\mathbf{z}; \mathbf{q}) = 0, \det\left (\frac{\partial P}{\partial \mathbf{z}} (\mathbf{z}; \mathbf{q}) \right) \neq 0\right.\right\}.
    \end{equation*}
    The following properties hold:
    \begin{enumerate}
    \item $\mathcal{N}(\mathbf{q})$ is finite and remains constant, denoted as $\mathcal{N}$, for almost all $\mathbf{q}\in \mathbb{C}^m$;
    
    \item For all $\mathbf{q}\in \mathbb{C}^m$, it follows that $\mathcal{N}(\mathbf{q}) \leq \mathcal{N}$;
    
    \item  The subset of $\mathbb{C}^m$ where $\mathcal{N}(\mathbf{q}) = \mathcal{N}$ is a Zariski open set. In other words, the exceptional subset of $\mathbf{q} \in \mathbb{C}^m$ where $\mathcal{N}(\mathbf{q}) <\mathcal{N}$ is an affine algebraic set contained within an algebraic set of dimension $n-1$.
    \end{enumerate}
    
    \end{lemma}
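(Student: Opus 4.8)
The plan is to reduce the statement to the structure theory of the \emph{solution variety} of the parametrized system $P$. I would introduce the incidence set
\[
W := \bigl\{(\mathbf{z},\mathbf{q}) \in \mathbb{C}^n \times \mathbb{C}^m : P(\mathbf{z};\mathbf{q}) = 0,\ \det\bigl(\tfrac{\partial P}{\partial \mathbf{z}}(\mathbf{z};\mathbf{q})\bigr) \neq 0\bigr\},
\]
which is a quasi-affine (locally closed) algebraic set, together with the projection $\pi : W \to \mathbb{C}^m$, $(\mathbf{z},\mathbf{q}) \mapsto \mathbf{q}$. By construction the fibre $\pi^{-1}(\mathbf{q})$ is exactly the set counted by $\mathcal{N}(\mathbf{q})$, so all three assertions become statements about the fibres of $\pi$.

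First I would record the crucial consequence of the Jacobian nonvanishing condition: by the implicit function theorem, near every point of $W$ the coordinate $\mathbf{z}$ is a holomorphic (in fact algebraic) function of $\mathbf{q}$, so $\pi$ is unramified on $W$. Hence every fibre of $\pi$ is a discrete algebraic set, and a discrete algebraic set over $\mathbb{C}$ is finite; this already gives $\mathcal{N}(\mathbf{q}) < \infty$ for every $\mathbf{q}$. Next I would decompose $W$ into its finitely many irreducible components $W = W_1 \cup \cdots \cup W_N$ and split them into two classes according to whether $\pi|_{W_i}$ is dominant. Since the fibres of $\pi$ are zero-dimensional, the fibre-dimension theorem gives $\dim W_i = \dim \overline{\pi(W_i)} \le m$, with equality exactly for the dominant components; for the non-dominant ones, Chevalley's theorem shows $\pi(W_i)$ is constructible with closure $Z_i \subsetneq \mathbb{C}^m$ a proper algebraic subset.

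For each dominant component $W_i$, generic finiteness (Noether normalization / generic flatness in characteristic zero) furnishes a Zariski-dense open set $U_i \subseteq \mathbb{C}^m$ over which $\pi|_{W_i}$ is a $d_i$-sheeted unramified covering, where $d_i = [\mathbb{C}(W_i):\mathbb{C}(\mathbb{C}^m)]$; moreover, compactifying via Zariski's main theorem (a quasi-finite morphism factors as an open immersion into a finite one, and fibre cardinalities only drop on passing to an open subset) shows that \emph{every} fibre of $\pi|_{W_i}$ has at most $d_i$ points. Setting $\mathcal{N} := \sum_{i\ \mathrm{dominant}} d_i$ and summing these bounds over all components yields $\mathcal{N}(\mathbf{q}) \le \mathcal{N}$ for every $\mathbf{q}$, which is assertion~2. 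For assertions~1 and~3, shrink to
\[
U := \Bigl(\bigcap_{i\ \mathrm{dominant}} U_i\Bigr) \setminus \Bigl(\bigcup_{j\ \mathrm{non\text{-}dom.}} Z_j \ \cup\ \bigcup_{i \ne i'} \overline{\pi(W_i \cap W_{i'})}\Bigr),
\]
a finite intersection of Zariski-dense open sets, hence again Zariski-dense open. Over $U$ the non-dominant components contribute nothing, each dominant component contributes exactly $d_i$ points, and these are pairwise distinct across components, so $\mathcal{N}(\mathbf{q}) = \mathcal{N}$ identically on $U$. Its complement, the exceptional locus $\{\mathbf{q} : \mathcal{N}(\mathbf{q}) < \mathcal{N}\}$, is therefore contained in the proper algebraic subset $\mathbb{C}^m \setminus U$, hence is an affine algebraic set of positive codimension.

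The step I expect to be the main obstacle is the uniform upper bound $\mathcal{N}(\mathbf{q}) \le \mathcal{N}$ valid for \emph{every} parameter, rather than merely a generic one: because $W$ retains only the \emph{nonsingular} solutions, $\pi|_W$ is quasi-finite but neither finite nor proper over $\mathbb{C}^m$, so one cannot directly invoke ``number of points in a fibre $\le$ degree.'' The resolution is precisely the compactification/Zariski-main-theorem step above, which realizes each component as an open subvariety of a finite (hence proper) cover of parameter space; this is exactly the content packaged in \cite[Theorem 7.1.1]{Sommese05}, and the remainder of the argument is the bookkeeping over irreducible components sketched here.
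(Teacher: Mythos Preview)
The paper does not supply its own proof of this lemma: it is quoted verbatim as \cite[Theorem~7.1.1]{Sommese05} and used as a black box in the convergence analysis. Your proposal is a correct outline of the standard algebraic-geometric argument behind that theorem---incidence variety, unramifiedness from the Jacobian condition, irreducible decomposition, generic finiteness on dominant components, and Zariski's main theorem for the uniform upper bound---and you yourself acknowledge in the last sentence that this is precisely the content packaged in the cited reference. So there is no discrepancy to compare: the paper cites the result, and you have sketched its proof.
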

    Note that the set $\mathbb{R}^n$ is Zariski dense in $\mathbb{C}^n$~\cite{Dong2020}. Thus, the properties described above hold for almost all parameters $q \in \mathbb{R}^m$, although the number of isolated solutions of real value of the function varies and is no longer constant. Despite this imperfection, this result is sufficient for our purposes, as it establishes the necessary conditions for the subsequent discussion.

    By utilizing Lemma~\ref{lem:isolated} and Lemma~\ref{thm:mainasymp}, along with the established condition of the boundedness for this dynamical flow~\eqref{eq:dyflow}, we can demonstrate the following convergence property.  
    \begin{theorem}\label{thm:mainasymp0} 
    Let $\gamma(t) = (\mathbf{p}(t), U_1(t), \ldots, U_k(t))$ represent the flow defined in equation~\eqref{eq:dyflow}. Let $\gamma^*$ be an $\omega$-limit point of the flow $\gamma(t)$. Then, we have 
    \begin{equation} \label{eq:mainsaymp}
    \lim\limits_{t\rightarrow \infty} \gamma(t) = \gamma^*
    \end{equation}
    almost surely for any initial value $\gamma(0)$.
    \end{theorem}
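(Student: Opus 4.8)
\emph{Proof plan.} The plan is to recognize~\eqref{eq:dyflow} as an instance of the abstract setting of Lemma~\ref{thm:mainasymp}, with $\phi = f$ and $h$ equal to the right-hand side of~\eqref{eq:dyflow}, and then invoke that lemma. Three of its four hypotheses are already available. First, $f \geq 0$, being half a squared Frobenius norm. Second, Theorem~\ref{thm:conv} gives $\frac{d}{dt}f(\gamma(t)) \leq 0$ along every trajectory, and the Corollary that follows it gives the required equivalence $\frac{d}{dt}f(\gamma(t)) = 0 \iff \frac{d}{dt}\gamma(t) = 0$. Third, the trajectory is compact for all $t \geq 0$: each $U_k(t)$ stays on the unitary group $\mathcal{S}_n$, which is compact (consistent with the identity $\frac{d}{dt}\|U_k(t)\|_F^2 = 0$ noted just after~\eqref{eq:dyflow}), and $\mathbf{p}(t)$ stays on the simplex $\Delta^{r-1}$; hence $\gamma(t) \in \Delta^{r-1}\times\mathcal{S}_n^{\,r}$, a fixed compact set. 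Since Algorithm~\ref{alg:modified_descent} removes a coordinate at most $r-1$ times, one may assume no removal occurs past some finite time and apply the argument to the resulting flow on a lower-dimensional, still compact, product.

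The only remaining, and decisive, hypothesis is that the equilibria of~\eqref{eq:dyflow} are isolated, and this is where the qualifier ``almost surely'' enters. An equilibrium is a point at which $\mathrm{skew}(U_k^*\,\partial f/\partial U_k) = 0$ for every $k$ and $\partial f/\partial p_k$ is independent of $k$; appended to these one has the feasibility equations $U_k^*U_k = I$ and $\sum_k p_k = 1$. Substituting the explicit formulas of Theorem~\ref{thm:gradient} and writing each $U_k = U_k^{\mathfrak{R}} + \imath U_k^{\mathfrak{I}}$, every one of these is a polynomial equation in the real unknowns $(U_k^{\mathfrak{R}}, U_k^{\mathfrak{I}}, p_k)_{k=1}^{r}$ whose coefficients depend polynomially on the real and imaginary parts of $\rho$ and $\sigma$. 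This is exactly a parametrized polynomial system $P(\mathbf{z};\mathbf{q}) = 0$ as in Lemma~\ref{lem:isolated}, with $\mathbf{q}$ encoding the data $(\rho,\sigma)$ (and the counts of equations and unknowns match, so the system is square). Lemma~\ref{lem:isolated} then says that the parameters for which the number of geometrically isolated, Jacobian-nonsingular solutions falls below its generic value lie in a proper algebraic subset, hence in a set of measure zero; and by the Zariski density of $\mathbb{R}^n$ in $\mathbb{C}^n$ recalled just before this theorem, the exceptional set remains of measure zero after restriction to real data. Consequently, for almost every $(\rho,\sigma)$ the equilibria of~\eqref{eq:dyflow} are isolated, all four hypotheses of Lemma~\ref{thm:mainasymp} hold, and that lemma yields $\lim_{t\to\infty}\gamma(t) = \gamma^*$ for every feasible initial value $\gamma(0)$ with $\mathbf{p}(0)$ in the relative interior of $\Delta^{r-1}$, where $\gamma^*$ is the unique $\omega$-limit point; the phrase ``almost surely'' refers to the choice of data $(\rho,\sigma)$.

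The step I expect to be the genuine obstacle is precisely the isolatedness of the equilibrium set. The difficulty is that $f$ — and hence the flow~\eqref{eq:dyflow} — is invariant under continuous symmetries, most visibly $U_k \mapsto e^{\imath\theta_k}U_k$, and more generally $U_k \mapsto U_k V_k$ for any unitary $V_k$ commuting with $\rho$, since $f$ sees each $U_k$ only through $U_k\rho U_k^*$. This makes the unreduced equilibrium set positive-dimensional, so Lemma~\ref{lem:isolated} cannot be applied to it verbatim. To close the gap one must either adjoin gauge-fixing equations and argue that the reduced system is zero-dimensional for generic data, or else replace the ``isolated equilibria'' hypothesis by a {\L}ojasiewicz--Simon argument, using that $f$, though not holomorphic, is a genuine real-analytic (indeed polynomial) function on the compact feasible manifold, which forces the bounded descent trajectory to converge to a single critical point irrespective of the dimension of the critical set. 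A secondary, routine point is that Lemma~\ref{thm:mainasymp} is phrased for flows on $\mathbb{R}^n$: transferring it to the flow on $\Delta^{r-1}\times\mathcal{S}_n^{\,r}$ uses only that this manifold is connected, so the $\omega$-limit set is connected and the same ordering-and-continuity contradiction goes through.
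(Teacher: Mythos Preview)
Your proposal is correct and matches the paper's approach exactly: the paper offers no proof beyond the sentence immediately preceding the theorem, namely that the result follows ``by utilizing Lemma~\ref{lem:isolated} and Lemma~\ref{thm:mainasymp}, along with the established condition of the boundedness for this dynamical flow~\eqref{eq:dyflow},'' and you have faithfully fleshed out precisely that skeleton. The phase-symmetry obstacle you flag ($U_k \mapsto e^{\imath\theta_k}U_k$ forcing positive-dimensional equilibrium components) is genuine and is not addressed by the paper either, so your write-up is already more careful than the original.
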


    }

    \section{Numerical Experiments}
    In this section, we present three experiments demonstrating a decreasing trend of the objective function along the defined trajectory while addressing stability concerns. We have implemented our proposed method in MATLAB (version 2024b). For numerical integration, we utilized the ~\emph{ode15s} function with an absolute tolerance (Abstol) and a relative tolerance (Retol) set to $10^{-12}$, which allows the integrator to select the time step size adaptively. The program terminates once the objective function reaches $10^{-17}$, and {\color{red}we report the corresponding silhouettes. Despite the non-uniqueness of the approximated quantum channel, we demonstrate how our method can effectively approximate the original channel.}
    
    It is known that a mixed unitary quantum channel can have multiple ways of being decomposed. If a quantum channel admits the decomposition  
    \begin{eqnarray*}  
      \Phi(X) = \sum_{k=1}^r p_k U_k X U_k^{*},  
    \end{eqnarray*}  
    its corresponding Choi representation is given by  
    \begin{eqnarray*}  
        C(\Phi) = \sum_{k=1}^r \mathrm{vec}(\sqrt{p_k} U_k) \mathrm{vec}(\sqrt{p_k} U_k)^{*}.  
    \end{eqnarray*}  
    These two representations are equivalent~\cite{choi75, Watrous2018}. Thus, in our subsequent discussion, we use the Choi representation to determine whether different decompositions correspond to the same quantum channel.

    {\bf Example 1}
    This example illustrates the effectiveness of our proposed method through two experiments. 
    The first experiment tackles the following optimization problem~\eqref{eq:optimization}
    The second experiment solves a similar problem but over multiple pairs of input and output quantum states:
   \begin{subequations}\label{eq:example1multi}
    \begin{align}
    \text{Minimize} & \quad \frac{1}{2} \sum_{j=1}^m \Bigl\|\sigma_j - \sum_{k = 1}^r p_k\,U_k\,\rho_j\,U_k^*\Bigr\|_F^2, \\
    \text{subject to} & \quad U_k \in \mathcal{S}_n, \quad k = 1, \ldots, r, \\
    & \quad \mathbf{p} \in \Delta^{r-1},
    \end{align}
    \end{subequations}

    Assume that we have a mixed unitary quantum channel $\Phi$ over $\mathbb{C}^{5\times 5}$ that is unknown to our program. We define $\Phi$ by randomly generating five unitary matrices $U_k$ and a set of probabilities $p_k$ (which sum up to 1), so that
    \begin{eqnarray*}
    \Phi(X) &=& \sum_{k=1}^5 p_k \, U_k \, X \, U_k^*.
    \end{eqnarray*}
    
    Next, we create the one-shot data. Let $\rho \in \mathbb{C}^{5 \times 5}$ be a randomly generated positive-definite Hermitian matrix, which we regard as the input quantum data. We then set $\sigma = \Phi(\rho)$ to be the corresponding output quantum data. {\color{blue} Without knowing the initial number $r = 5$, our numerical procedure sets the initial data with $R = 10$, which is twice the exact low rank, and randomly generates the initial data  
\begin{eqnarray*}  
\{p_k(0), U_k(0)\}_{k=0}^{R}  
\end{eqnarray*}  
corresponding to the prescribed structure in~\eqref{eq:optimization}.

%
%
%
%
%
 By employing Algorithm~\ref{alg:modified_descent}, we generate the flow
 \begin{eqnarray*}
 {p_k(t), U_k(t)}_{k=0}^{R}.
 \end{eqnarray*}
 Figure~\ref{fig:example1a} verifies that the constructed flow monotonically decreases the objective function, as rigorously established in Theorem~\ref{thm:conv}. The red circles in Figures~\ref{fig:example1a} and \ref{fig:example1b} mark critical moments when any $p_k(t)$ approaches zero, prompting a program restart to ensure the solution remains feasible. Importantly, Figure~\ref{fig:example1b} demonstrates that the sum of $p_k(t)$ consistently equals $1$ throughout all iterations, despite the observed minor fluctuations. These fluctuations arise due to numerical integration and rounding errors; however, their magnitudes remain relatively small and close to zero, ensuring the overall reliability of the approach.
    
    \begin{figure}[htbp]
        \centering
        \begin{subfigure}[t]{0.45\textwidth}
            \centering
            \includegraphics[width=\linewidth]{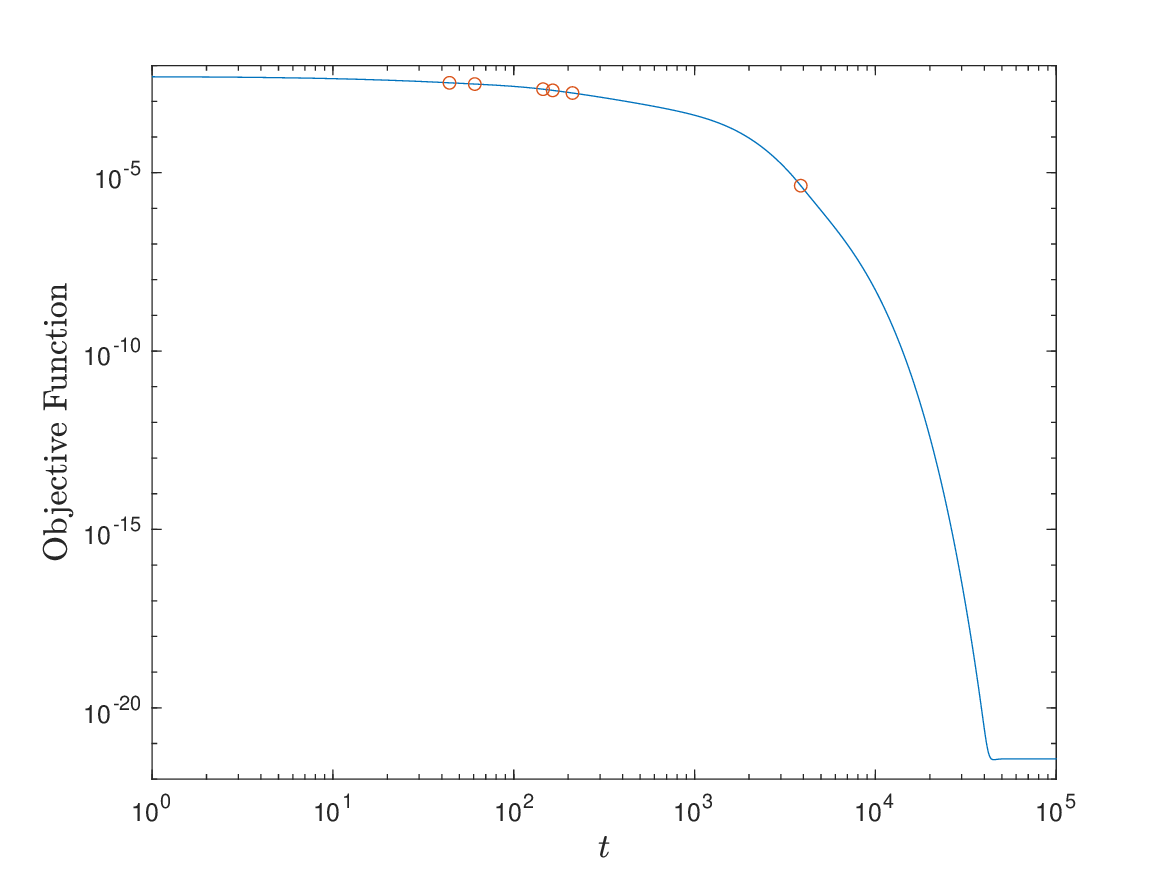}
            \caption{
            The evolution of the objective values of~\eqref{eq:optimization} 
            }
            \label{fig:example1a}
        \end{subfigure}
        \hspace{1em}
        \begin{subfigure}[t]{0.45\textwidth}
            \centering
            \includegraphics[width=\linewidth]{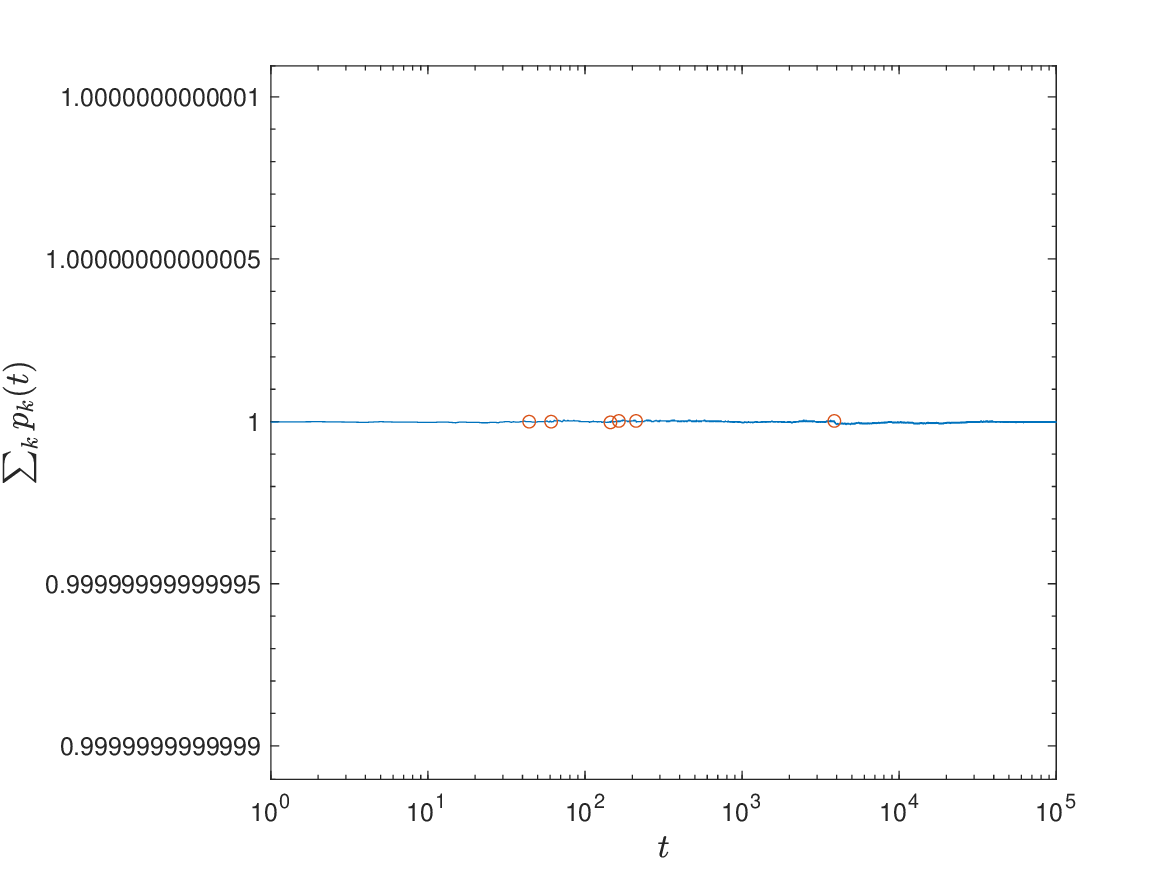}
            \caption{The evolution of $\sum_{k=1} p_k(t)$}
            \label{fig:example1b}
        \end{subfigure}
        \caption{Numerical results of solving~\eqref{eq:optimization}}
    \end{figure}
    
    The presence of five red circles in Figures~\ref{fig:example1a} and \ref{fig:example1b} indicates five restarts, which leads to a progressive reduction of the low-rank approximation to $5$. Since $\sigma$ is made up of precisely five quantum channels, the reduction of $R$ from $10$ to $5$ highlights the effectiveness of our method in identifying and eliminating redundant channels. Additionally, the objective function shows a substantial decrease, ultimately reaching $10^{-20}$ as $t$ nears $10^5$, highlighting the robustness and accuracy of our algorithm in optimizing the problem at hand.

Next, we investigate how multiple datasets can aid in recovering the initial channel $ \Phi $. It is important to note that the optimization problem in~\eqref{eq:optimization} may admit multiple solutions due to the degrees of freedom in $ U_k$ exceeding the amount of information the data provides. Based on the contributions of Choi and Jamio{\l}kowski, different quantum channels can correspond to distinct Choi matrix representations. To assess the robustness of our method, we run the optimization $20$ times with different initial guesses, then compute the difference between each resulting optimal channel $ \hat{\Phi} $ and the original channel $\Phi $ using their corresponding Choi matrices; specifically, we evaluate the deviation through the Frobenius norm $ \|C(\Phi) - C(\hat{\Phi})\|_F$. The results are shown in~\ref{fig:example1_1}, where the x-axis represents the difference between the computed optimal channel and the original quantum channel. Although our experiments still reveal that each run successfully reduces the objective function to $10^{-20}$, the resulting quantum channel still exhibits deviations from the true channel.

    \begin{figure}[htbp]
        \centering
        \includegraphics[width=0.5\linewidth]{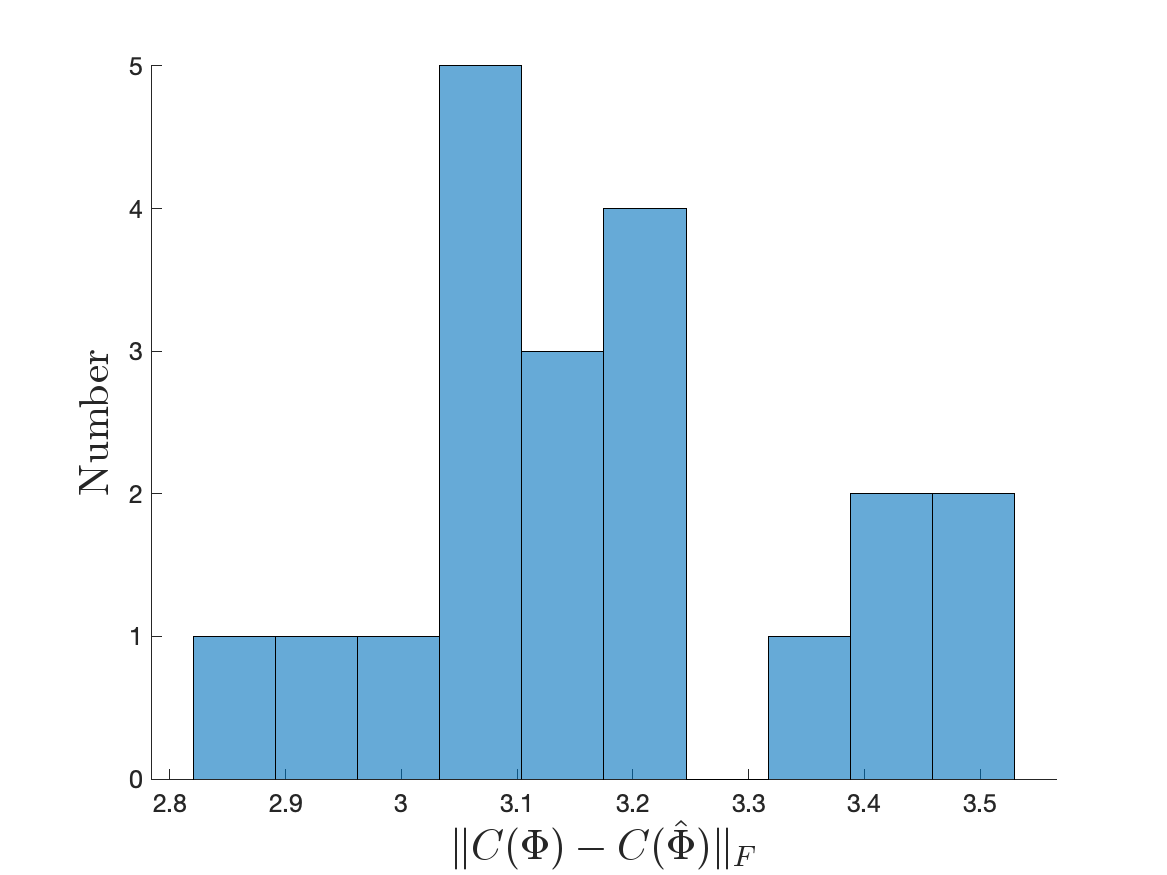}
        \caption{Values of $\|C(\Phi)-C(\hat{\Phi})\|_F$ collecting from 20 runs}
        \label{fig:example1_1}
    \end{figure}

     To address this issue, we aim to evaluate the impact of providing additional data pairs to enhance the likelihood of accurately approximating the original quantum channel. This is done by solving the optimization problem in~\eqref{eq:example1multi}. Establishing the dynamical system for \eqref{eq:example1multi} is similar to that given in  \eqref{eq:dyflow}, except that we sum over all data pairs; therefore, we omit the entire process for brevity. Specifically, we use the same setup but with $m=20$ pairs of input and output quantum states. Our primary objective is to verify that the proposed method continues to decrease the objective function while maintaining the sum-to-one property among the $p_k$. These features are demonstrated in Figure~\ref{fig:example1c} for the descent behavior and in Figure~\ref{fig:example1d} for the sum-to-one property, both of which confirm the applicability of our method to the multi-shot problem in~\eqref{eq:example1multi}.
    \begin{figure}[htbp]
        \centering
        \begin{subfigure}[t]{0.45\linewidth}
            \centering
            \includegraphics[width=\linewidth]{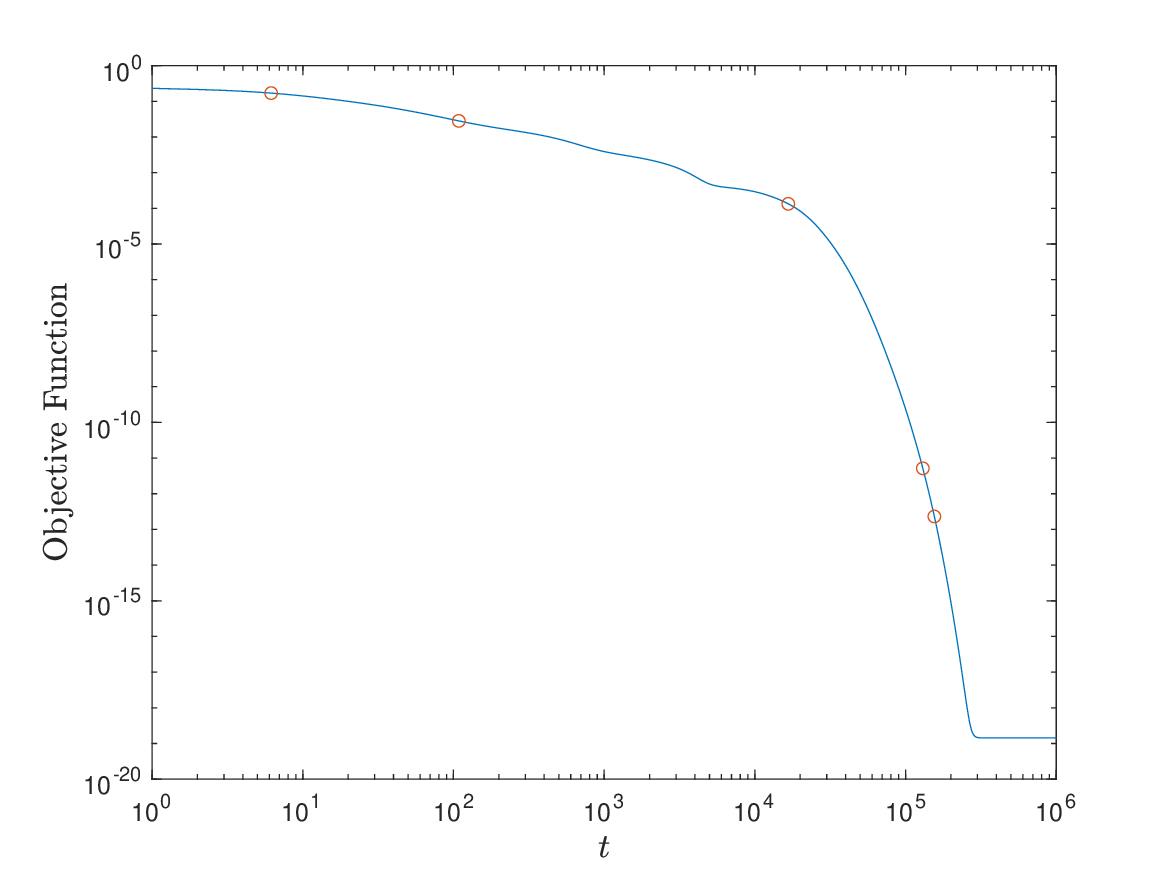}
            \caption{The evolution of the objective values of~\eqref{eq:example1multi} 
            }
            \label{fig:example1c}
        \end{subfigure}
        \hfill
        \begin{subfigure}[t]{0.45\linewidth}
            \centering
            \includegraphics[width=\linewidth]{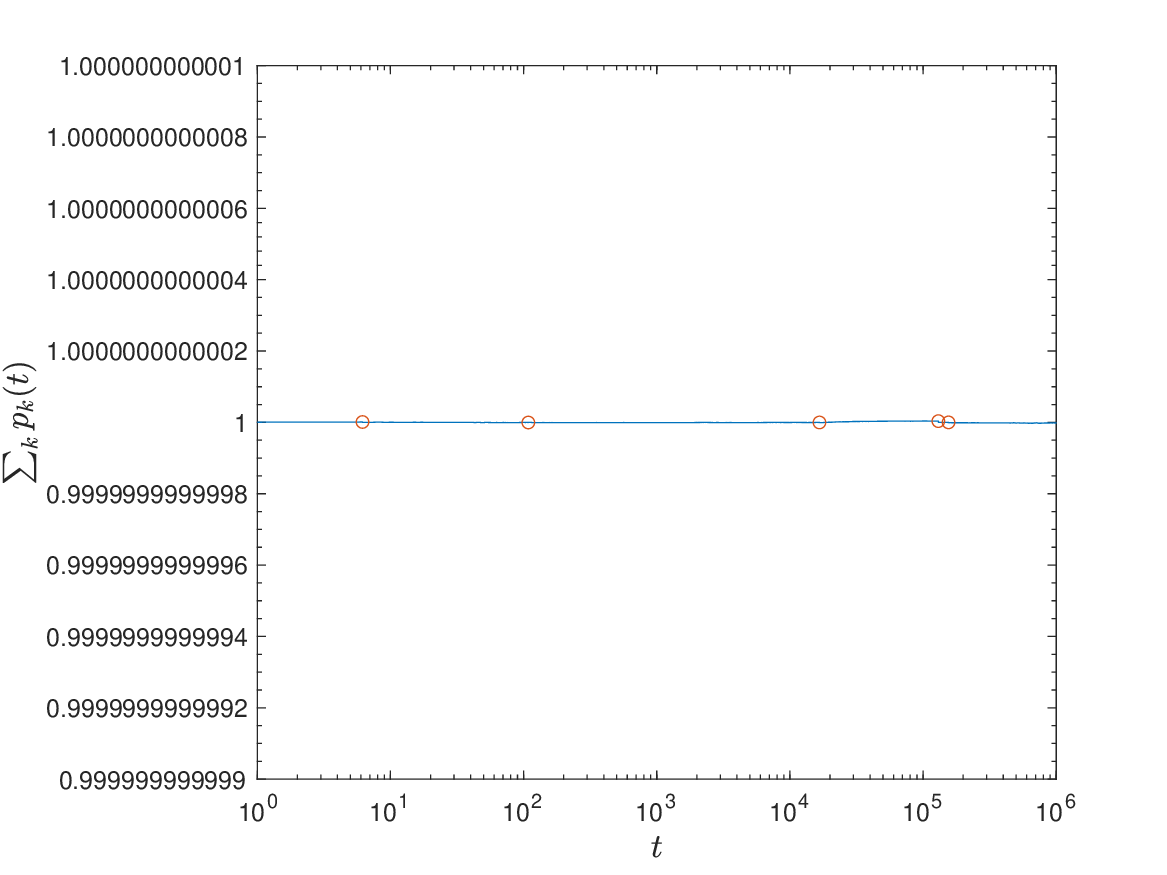}
            \caption{The evolution of $\sum_{k=1} p_k$}
            \label{fig:example1d}
        \end{subfigure}
    
        \caption{Numerical results of solving \eqref{eq:example1multi}, where the five circles label the restart occurrence.}
    \end{figure}

Next, we evaluate whether additional data enhances the program's capacity to reconstruct the original quantum channel. 
{\color{red} 
We use 100 data points, i.e., $\{\sigma_j, \rho_j\}_{j=1}^{100}$, in each experiment, as expressed in \eqref{eq:example1multi}. This procedure is repeated 20 times. The distribution of the differences $\|C(\Phi) - C(\hat{\Phi})\|_F$ across the 20 runs is shown in Figure~\ref{fig:example1_2}.}
\begin{figure}[htbp]
    \centering
    \includegraphics[width=0.5\linewidth]{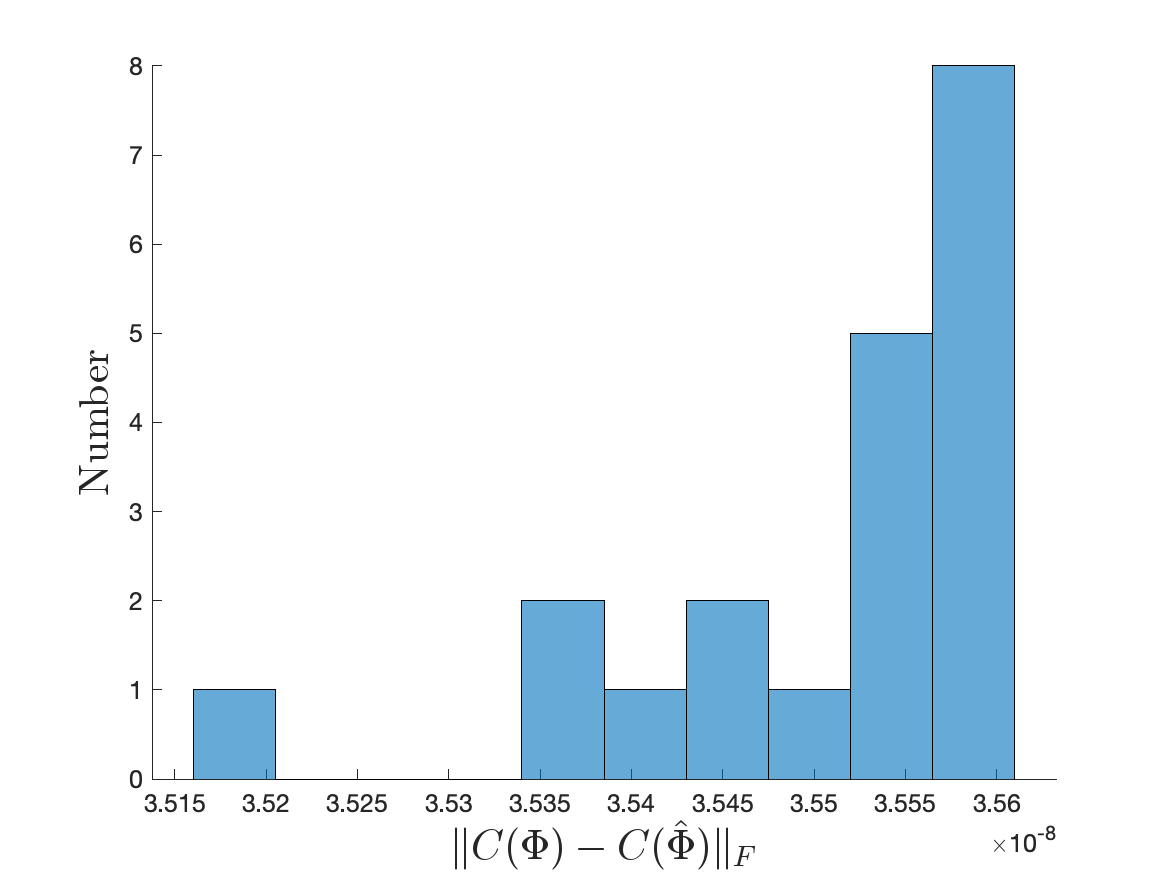}
    \caption{Distribution of $\|C(\Phi)-C(\hat{\Phi})\|_F$ collected from 20 runs.}
    \label{fig:example1_2}
\end{figure}  
In contrast to Figure~\ref{fig:example1_1}, where the x-axis covers a wider range, the x-axis in Figure~\ref{fig:example1_2} is centered around $3.5\times 10^{-8}$.The findings indicate that the reconstructed channels, derived from multiple data pairs, closely resemble the original channel in terms of their Choi matrix representations. Therefore, we conclude that our method can effectively reconstruct an unknown mixed unitary quantum channel when enough data pairs are available.

{\bf Example 2}
 The second example examines the depolarizing channel defined in~\eqref{eq:depolar}, where the parameter $p$ controls the strength of the noise. This channel is crucial for simulating errors in quantum information processing, while in quantum error correction, it aids in the design of codes that aim to mitigate the effects of quantum noise.The second example examines the depolarizing channel defined in~\eqref{eq:depolar}, where the parameter $p$ controls the strength of the noise. This channel is essential for simulating errors in quantum information processing and plays a significant role in quantum error correction, helping to design codes that can mitigate the impacts of quantum noise. We set $p = 0.9$ to characterize the channel and evaluate our proposed method by comparing the Choi matrix representation of the learned channel with that of the actual channel. We provide $20$ input data points and measure the corresponding outputs using the depolarizing channel to achieve this goal. The experiment is repeated $20$ times, as done in previous studies. We then plot the difference distribution between the Choi matrix representations of the actual and approximated channels.  

\begin{figure}[htbp]  
    \centering  
    \includegraphics[width=0.5\linewidth]{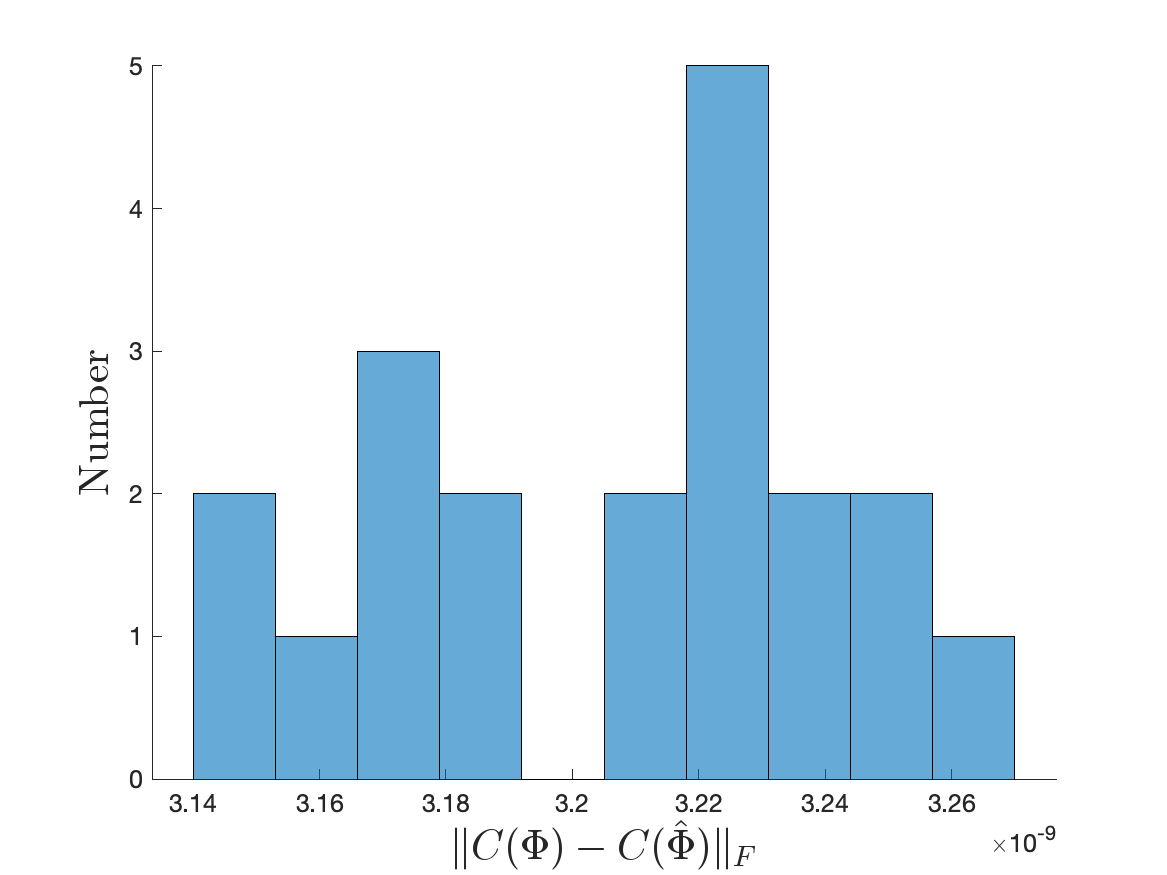}  
    \caption{Distribution of all $\|C(\Phi)-C(\hat{\Phi})\|_F$ collected from 20 runs.}  
    \label{fig:example1_3}  
\end{figure}  

As shown in Figure~\ref{fig:example1_3}, the difference in the Choi matrix representations between the approximated and actual depolarizing channels is concentrated around $3.2 \times 10^{-9}$ across the 20 runs. These results demonstrate that our method effectively identifies a quantum channel that closely approximates the unknown channel, resulting in minimal error in the Choi matrix representation.

    \section{Conclusion}
    This article proposes a descent flow approach to approximate an unknown unitary quantum channel. We formulate the problem as an optimization task on complex Stiefel manifolds and construct the flow using Wirtinger derivatives. Theoretically, we prove that the flow reduces the objective function while preserving the positivity and sum-to-one properties of the probability distribution $p_k$, which characterizes the approximated channel. Moreover, we show that the $\omega$-limit points obtained through our method are isolated and correspond to critical points of the objective function. These findings ensure the applicability of our approach in achieving an optimal solution. We validate the effectiveness and stability of our method numerically through experiments. Our approach is extended from single-shot to multi-shot data, and we evaluate the computed accuracy based on the Choi matrix representation of the quantum channel. The numerical results indicate that the proposed method can effectively approximate the unknown quantum channel by using multiple datasets.
    }

    \bibliographystyle{siamplain}
    \bibliography{quantum}
    \end{document}